\newtheorem{theorem}{Theorem}[section]
\newtheorem{lemma}[theorem]{Lemma}
\theoremstyle{definition}
\theoremstyle{remark}
\newtheorem{remark}{Remark}
\newcommand{\C}{\mathbb{C}}
\newcommand{\N}{\mathbb{N}}
\newcommand{\eqd}{=}
\newcommand{\V}{\textsc{vdm}}
\newcommand*{\Num}{N\textsuperscript{o}\xspace}
\begin{document}
\title{ A new estimate of the transfinite diameter of Bernstein sets}
\author{Dimitri Jordan Kenne}
\address{Doctoral school of Exact and Natural Sciences, Jagiellonian University, Łojasiewicza 11, Kraków, 30-348, Lesser Poland, Poland.}
\email{dimitri.kenne@doctoral.uj.edu.pl}
\date{\today}

\begin{abstract}
Let $K \subset \mathbb{C}^n$ be a compact set satisfying the following Bernstein inequality: for any $m \in \{ 1,..., n\}$ and for any $n$-variate polynomial $P$ of degree $\deg(P)$ we have 
\begin{align*}
    \max_{z\in K}\left|\frac{\partial P}{\partial z_m}(z)\right| \le M \deg(P) \max_{z\in K}|P(z)| \ \mbox{ for } z = (z_1, \dots, z_n). 
\end{align*}
for some constant $M= M(K)>0$ depending only on $K$. We show that the transfinite diameter of $K$, denoted $\delta(K)$, verifies the following lower estimate 
\begin{align*}
    \delta(K) \ge \frac{1}{n M}, 
\end{align*}
which is optimal in the one-dimensional case. In addition, we show that if $K$ is a Cartesian product of compact planar sets then 
\begin{align*}
    \delta(K) \ge \frac{1}{M}. 
\end{align*}
\end{abstract}

\subjclass[2000]{Primary 41A17, 31C15, 32U15}

\keywords{Transfinite diameter, Bernstein inequality, Markov inequality}

\maketitle

\section{Introduction}\label{sec1}
Sets verifying a Markov inequality are of particular interest in approximation and pluripotential theories.  In \cite{plesniak1990markov}, Ple\'sniak posed the question of the continuity of Green pluricomplex function for these sets. In particular, he wanted to know whether they are nonpluripolar or not, which is equivalent to verifying that the transfinite diameter is positive or zero (see \cite{levenberg1984}).  A subclass of these Markov sets known as Bernstein sets, have been proven by Siciak in \cite{siciak1997wiener} to be nonpluripolar. Some positive lower bounds of the transfinite diameter of Bernstein sets were proven by Białas-Cież and Jedrzejowski (first) in \cite{bialas2002transfinite} and Yazici (later) in \cite{yazici2022note}; therefore emphasing their nonpluripolarity. In this paper, we present a better lower estimate of the transfinite diameter of Bernstein sets which is optimal in the one-dimensional case. \par
We consider $\mathcal{P}_d(\C^n)$, the space of $n$-variate polynomials of total degree at most $d$. A compact set $K \in \C^n$ is a \textbf{Markov set} if it satisfies the following Markov inequality: for each $m \in \{1, \dots, n\}$ and for every polynomial $P \in \mathcal{P}_d(C^n)$
\begin{equation}
    \max_{z\in K}\big|\frac{\partial P}{\partial z_m}(z) \big| \le M d^r \max_{z\in K}|P(z)|\quad \mbox{ for }  z = (z_1, \dots, z_n)\label{equ1} 
\end{equation}
 for some constants $M= M(K)>0$, $r=r(K)>0$ which depend only on $K$.
 If \eqref{equ1} is verified with $r=1$ then $K$ is called a \textbf{Bernstein set}. For instance, the closed unit disc $D(0,1) = \{z \in \C: \ |z| \le 1\}$ is a well-known Bernstein set as it satisfies the inequality 
\begin{equation*}
    \max_{z\in D(0,1)}\left| P'(z) \right| \le  d \max_{z\in D(0,1)}|P(z)| 
\end{equation*} for all polynomial $P \in \mathcal{P}_d(\C)$. More generally, any finite union of $\mathcal{C}^2$-smooth Jordan curves is also a 
 Bernstein set (see \cite{nagy2005sharpening}). Examples of Bernstein sets with an infinite number of connected components have been constructed in \cite{tookos2005markov}.\par
The Bernstein's property is strongly related to the smoothness of Green pluricomplex function. Indeed, Siciak shows in \cite{siciak1997wiener} that, a set is Bernstein (or verifies a Bernstein inequality) if and only if its Green pluricomplex function is H\"older continuous (with exponent $\mu = 1$). It follows from this equivalence that Bernstein sets are nonpluripolar. As far as we know, the generalisation of this result to all Markov sets remains an open question (for $n \ge 2$).\par
We give in the preliminary section \ref{sec2}, the definition of the transfinite diameter of a given compact set $K \subset \C^n$ and throughout this paper we denote it, $\delta(K)$. Now, we recall the lower estimates of the transfinite diameter of Bernstein sets that have been proven so far. In \cite{bialas1998markov}, Bia{\l}as-Cie{\.z} established that the transfinite diameter of any Markov set $K\subset \C$  verifies the following estimate
\begin{equation}
    \delta(K) > \dfrac{1}{M}\sigma^r (\mbox{diam}(K))^{\frac{1}{3}}
\end{equation}
where $\sigma$ is an absolute positive constant and $\mbox{diam}(K)$ is the diameter of $K$. As a consequence, all planar Markov sets are nonpolar. Later, Bialas-Cie{\.z} and Jedrzejowski proved in  \cite{bialas2002transfinite} that any Bernstein set $K \subset \C^n$ satisfies 
\begin{equation}
    \delta(K) > \dfrac{1}{M2^{n-1}}. \label{equ4}
\end{equation}
Recently in \cite{yazici2022note}, Yazici improved the previous lower estimate for the transfinite diameter of any Bernstein set $K \subset \C^n $ for $n> 4$, by proving that 
\begin{equation}
    \delta(K) > \dfrac{1}{enM} \label{equ5}.
\end{equation}
In this paper, we use an idea from \cite{jkedrzejowski1992transfinite} involving generalized Leja points to obtain a better lower bound for the transfinite diameter of Bernstein sets. Our main result is the following.
\begin{theorem}\label{thm1}
Let $K$ be a Bernstein set in $\C^n$ of parameter $M$. Then
\begin{equation}
    \delta(K) \ge \frac{1}{n M}. \label{equ3}
\end{equation}
\end{theorem}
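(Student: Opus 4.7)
My plan is to combine an iterated Bernstein bound on certain Chebyshev constants with the Leja representation of the transfinite diameter, adapting the idea of Jedrzejowski \cite{jkedrzejowski1992transfinite}.

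For each multi-index $\alpha \in \N^n$, introduce the Chebyshev-type number
\[
\tau_\alpha(K) := \inf\{\|P\|_K : P = z^\alpha + Q, \ Q \in V_\alpha\},
\]
where $V_\alpha$ is the span of monomials $z^\beta$ with $|\beta|\le|\alpha|$ and $\beta\ne\alpha$. The first key step is the lower bound
\[
\tau_\alpha(K) \ge \frac{1}{M^{|\alpha|}\binom{|\alpha|}{\alpha_1,\dots,\alpha_n}} \ge \frac{1}{(nM)^{|\alpha|}}.
\]
The proof is by induction on $|\alpha|$: if $T_\alpha$ attains the infimum and $\alpha_m\ge 1$, then $\frac{1}{\alpha_m}\frac{\partial T_\alpha}{\partial z_m}$ has coefficient $1$ on $z^{\alpha - e_m}$, while its remainder lies in $V_{\alpha - e_m}$ (no monomial in $V_\alpha$ differentiates to $z^{\alpha - e_m}$). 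Bernstein gives $\left\|\frac{\partial T_\alpha}{\partial z_m}\right\|_K \le M|\alpha|\,\tau_\alpha$, hence $\tau_{\alpha - e_m} \le M|\alpha|\tau_\alpha/\alpha_m$. Reducing $\alpha$ one coordinate at a time down to $\tau_0 = 1$, the factors telescope into $\prod_i \alpha_i!/|\alpha|! = 1/\binom{|\alpha|}{\alpha_1,\dots,\alpha_n}$, and the second inequality is the multinomial estimate $\binom{|\alpha|}{\alpha_1,\dots,\alpha_n} \le n^{|\alpha|}$.

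Next, for each $d$, let $(e_1,\dots,e_N)$ be the monomial basis of $\mathcal{P}_d(\C^n)$ ordered by non-decreasing total degree, with $e_k = z^{\alpha_k}$. Construct generalized Leja points $\xi_1,\dots,\xi_N \in K$ so that $\xi_k$ maximizes on $K$
\[
p_k(z) := \det\bigl[e_i(\xi_1),\dots,e_i(\xi_{k-1}),e_i(z)\bigr].
\]
Cofactor expansion of $p_k$ along its last column identifies the coefficient of $z^{\alpha_k}$ with the $(k-1)\times(k-1)$ minor $D_{k-1} := \det[e_i(\xi_j)]_{i,j=1}^{k-1}$; the remaining terms lie in $\mathrm{span}(e_1,\dots,e_{k-1}) \subset V_{\alpha_k}$ by the chosen ordering. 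Thus $p_k/D_{k-1}$ is admissible for $\tau_{\alpha_k}$, giving $\|p_k\|_K \ge |D_{k-1}|\tau_{\alpha_k}$. The Leja property supplies $\|p_k\|_K = |p_k(\xi_k)| = |D_k|$, so $|D_k/D_{k-1}| \ge \tau_{\alpha_k}$. Telescoping and invoking the Chebyshev bound,
\[
|D_N| \ge \prod_{k=1}^N \tau_{\alpha_k} \ge (nM)^{-\sum_k |\alpha_k|} = (nM)^{-l(d)},
\]
where $l(d) = \sum_k|\alpha_k|$ is the normalising exponent in the definition of $\delta(K)$. Since the optimal Vandermonde exceeds $|D_N|$, we obtain $\delta_d(K) \ge 1/(nM)$, and letting $d\to\infty$ yields the theorem.

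The main obstacle I anticipate is the Chebyshev step. First, picking the ``right'' Chebyshev constant matters: allowing $Q$ to contain degree-$|\alpha|$ monomials other than $z^\alpha$, rather than restricting to $\mathcal{P}_{|\alpha|-1}$, is precisely what makes the Leja polynomial $p_k/D_{k-1}$ an admissible competitor under the chosen basis ordering. Second, the telescoping of the recursion $\tau_\alpha \ge \alpha_m \tau_{\alpha-e_m}/(M|\alpha|)$ into the multinomial coefficient must be done cleanly, and one should also check that the Leja construction is non-degenerate---since Bernstein sets are nonpluripolar \cite{siciak1997wiener}, $D_{k-1}\ne 0$ forces $p_k\not\equiv 0$ on $K$, so the inductive selection of $\xi_k$ always succeeds.
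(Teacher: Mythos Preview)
Your argument is correct and follows essentially the same route as the paper: both proofs telescope the Leja representation $|\V(\xi_0,\dots,\xi_{h_d-1})|=\prod_i \|P_i\|_K$ and bound each factor from below by $\alpha(i)!/(M^{|\alpha(i)|}|\alpha(i)|!)$ via an iterated application of the Bernstein inequality, then invoke the multinomial estimate. The only cosmetic difference is packaging: the paper isolates the iterated Bernstein step as a lemma giving $\|D^{\alpha(i)}P_i\|_K\le M^{|\alpha(i)|}|\alpha(i)|!\,\|P_i\|_K$ for every polynomial in the graded-lex monic class $\mathcal{P}^i$ (which requires a small check, Lemma~\ref{lem2}, that differentiation respects the graded-lex order), whereas you phrase the same induction through the Chebyshev numbers $\tau_\alpha$ over the larger class $z^\alpha+V_\alpha$, which lets you skip that bookkeeping.
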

\begin{remark}
Note that the estimate in \eqref{equ3} is optimal in the one-dimensional case since the closed disc $D(0,R)= \{z \in \C: \ |z| \le R\}$ ($R>0$) is a Bernstein set with $M=1/R$ and its transfinite diameter is $\delta(D(0,R)) = R$. Moreover, the estimate in \eqref{equ3} is better than \eqref{equ5} for all natural numbers $n$ and better than \eqref{equ4} for all $n\ge 3$.
\end{remark}
It turns out that the estimate given in Theorem \ref{thm1} can be improved for the case of a Cartesian product of planar sets. 
\begin{theorem}\label{thm2}
Let $K=K_1 \times \dots \times K_n$ be a Bernstein set in $\C^n$ of parameter $M$. Then
\begin{equation}
    \delta(K) \ge \frac{1}{M}. \label{equ7}
\end{equation}
\end{theorem}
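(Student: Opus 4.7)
My strategy is to reduce Theorem~\ref{thm2} to the one-dimensional case of Theorem~\ref{thm1} (which is sharp) applied to each planar factor, and then to conclude via the classical product formula for the transfinite diameter of a Cartesian product of planar compacta.

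The first step is to verify that each factor $K_j$ is itself a Bernstein set in $\C$ with parameter at most $M$. Given a univariate polynomial $p$ of degree $d$, I lift it to the $n$-variate polynomial $P(z_1,\dots,z_n) := p(z_j)$. Then $\partial P/\partial z_j = p'(z_j)$ and $\max_K|P| = \max_{K_j}|p|$, so the Bernstein inequality assumed on $K$ becomes
$$\max_{K_j}|p'| \le Md\,\max_{K_j}|p|,$$
which is exactly the one-variable Bernstein inequality on $K_j$ with the same constant $M$. Applying Theorem~\ref{thm1} in dimension one---where the estimate is optimal, as recorded in the remark following Theorem~\ref{thm1}---then yields $\delta(K_j) \ge 1/M$ for every $j \in \{1,\dots,n\}$.

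The second step is to invoke the product formula of Jedrzejowski~\cite{jkedrzejowski1992transfinite} for the transfinite diameter of a Cartesian product of planar compact sets,
$$\delta(K_1 \times \cdots \times K_n) = \Bigl(\prod_{j=1}^{n}\delta(K_j)\Bigr)^{1/n}.$$
Combining with the previous step yields $\delta(K) \ge ((1/M)^n)^{1/n} = 1/M$, which is exactly~\eqref{equ7}.

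The main---indeed the only substantial---ingredient is the product formula itself; the rest is a routine reduction. Should one wish to avoid quoting that formula and give a self-contained proof, one could try to adapt the generalized-Leja-point construction used in the proof of Theorem~\ref{thm1} directly to the product geometry, exploiting the tensor-product structure of the monomial basis adapted to $K_1\times\cdots\times K_n$ in order to absorb the factor of $n$ that appears in the denominator of~\eqref{equ3}.
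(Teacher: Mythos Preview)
Your argument is correct but follows a genuinely different route from the paper. The paper does exactly what you describe in your final paragraph as the alternative: it reruns the Leja-point computation from the proof of Theorem~\ref{thm1}, replacing Lemma~\ref{lem1} by Lemma~\ref{lem3}. The point of Lemma~\ref{lem3} is that on a product set one may apply the Bernstein inequality variable by variable (fixing the remaining coordinates), so that the iterated bound picks up a factor $(\alpha(i)!)$ rather than $(|\alpha(i)|!)$. Since $D^{\alpha(i)}P_i=\alpha(i)!$, this cancels exactly and one obtains $\max_K|P_i|\ge M^{-|\alpha(i)|}$, whence $\delta(K)\ge 1/M$ with no factor of $n$. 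Your approach instead externalises the product structure: you push the Bernstein inequality down to each planar factor, apply the sharp one-dimensional case of Theorem~\ref{thm1}, and recombine via the product formula $\delta(K_1\times\cdots\times K_n)=\bigl(\prod_j\delta(K_j)\bigr)^{1/n}$. Your route is shorter and conceptually cleaner provided one is willing to quote the product formula; the paper's route is self-contained within the Leja-point framework already set up and avoids invoking that external result.

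One small caveat: the product formula for the transfinite diameter of a Cartesian product of planar compacta is indeed classical, but it is not obviously in~\cite{jkedrzejowski1992transfinite}, which the paper cites only for the Leja-sequence characterisation of $\delta(K)$. The formula is usually attributed to Schiffer--Siciak (for two factors) or derived from Zaharjuta's directional Chebyshev constants~\cite{zaharjuta1975transfinite}; you should supply an accurate reference.
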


\section{Preliminaries} \label{sec2}
Let $\N = \{0,1,2, \dots\}$ be the set of natural numbers and let $\alpha: \N \ni j \longmapsto (\alpha_1(j), \dots, \alpha_n(j))$ be the enumeration on $\N^n$ associated with the graded lexicographical ordering which we denote by ``$\prec$". We write $|\beta| = \beta_1+ \dots +\beta_n$ for the length of a multi-index $\beta = (\beta_1, \dots, \beta_n)$.
Consider the monomials
\begin{align}
    e_j(z):= z^{\alpha(j)}  = z_1^{\alpha_1(j)} \cdots z_2^{\alpha_n(j)}, \quad j=1, 2, \dots.
\end{align}
The space of holomorphic polynomials of $n \ge 1$ complex variables and of degree at most $d \in \N$ is
\begin{equation}
     \mathcal{P}_d(\C^n) = \mbox{span}\{e_i:= z^{\alpha(i)} = z_1^{\alpha_1(i)}  \cdots 
     z_n^{\alpha_n(i)}; \  i \in \N \mbox{ and }\deg(e_i)\le d\}
\end{equation}
and its dimension is $h_d := \dim(\mathcal{P}_d(\C^n)) = \binom{n+d}{n}$.  For any set of points $\{\xi_0, \dots, \xi_{k-1}\}$ of $\C^n$, we define the generalized Vandermonde determinant $\V(\xi_0, \dots, \xi_{k-1})$ by:
\begin{equation}
    \V(\xi_0, \dots, \xi_{k-1}) := \det [e_i(\xi_j)]_{i,j = 0,1,2, \dots, k-1} 
\end{equation} with the convention $\V(\xi_0) := 1$.
For any $k \in \N$, we consider the constants 
\begin{equation}
     V_k := \max_{\xi_0, \dots, \xi_{k-1}\in K} \left| \V(\xi_0, \dots, \xi_{k-1}) \right|, \quad l_d := \sum_{i=1}^d i (h_i - h_{i-1}).
\end{equation}
The constant $l_d$ is the total degree of $\V(\xi_0, \dots, \xi_{h_d-1})$ viewed as a polynomial in $\xi_0, \dots, \xi_{h_d-1}$. It is known that $l_d = n\binom{n+d}{n+1}$. \par
The \textbf{transfinite diameter} of a compact set $K \subset \C^n$ is the constant
    \begin{equation}
        \delta(K) \eqd \limsup_{d \to + \infty} \delta_d(K),
    \end{equation}
    where $\displaystyle \delta_d(K) \eqd  V_{h_d}^{1/l_d}$ is called the $d$-th order transfinite diameter of $K$.
Fekete proved in \cite{fekete1923verteilung} that the limit $\delta(K)$ exists for any compact set $K \subset \C$ (i.e when $n=1$). Later in \cite{leja1959problemes}, Leja introduced the name ``transfinite diameter" and thus posed the problem of its existence when $n\ge 2$. A positive answer to his problem was given later by  Zaharjuta in \cite{zaharjuta1975transfinite}.\\
The transfinite diameter of a compact set can also be determined using Leja sequences. A Leja sequence for a compact set $K\subset \C^n$ is a sequence $(\xi_j)_{j\ge 0}$ such that $\xi_0$ is any arbitrary point (preferably on $\partial K$) and for each $N \ge 1$
          \begin{equation}
         |\V(\xi_0, \dots,\xi_{N-1}, \xi_N)| = \sup_{z \in K} |\V(\xi_0, \dots, \xi_{N-1},z)|.
     \end{equation}
It is proved in \cite{jkedrzejowski1992transfinite} that, for any compact set $K \subset \C^n$ 
\begin{equation}
    \delta(K) = \lim_{d \to +\infty}|\V(\xi_0, \dots,\xi_{h_d-1})|^{1/l_d}
\end{equation} 
We Consider the following mappings
\begin{align}
    \begin{array}{rccl}
         \partial_i:& \N^n & \longrightarrow & \N^n  \\
        & \beta & \longmapsto & \partial_i \beta = \begin{cases}
        (0, \dots, 0)& \text{if }\beta_i = 0\\
        (\beta_1, \dots , \beta_{i-1}, \beta_i -1, \beta_{i+1}, \dots, \beta_n)& \text{if } \beta_i \ge 1
        \end{cases},
    \end{array} 
\end{align}
$i=1, \dots,n$.\par Observe that $\displaystyle \dfrac{\partial z^\beta}{\partial z_m} = \beta_mz^{\partial_m(\beta)}$ for any multi-index $\beta \in \N^n$ and for any $i=1, \dots,n$.\\
The following lemma is a direct consequence to the fact that the graded lexicographical order is translation invariant.
\begin{lemma}\label{lem2}
Let $\beta = (\beta_1, \dots,\beta_n)$ and $\gamma = (\gamma_1, \dots , \gamma_n)$ be two multi-indices in $\N^n$. If $\beta \prec \gamma$ then for any $i\in \{1, \dots,n\}$ such that $\gamma_i \ge 1$ we have $\partial_i \beta \prec \partial_i \gamma$ or $\partial_i \beta = \partial_i \gamma = (0, \dots, 0)$.
\end{lemma}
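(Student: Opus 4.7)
The plan is to reduce everything to the translation invariance property mentioned by the author, after separating off the degenerate case where the operator $\partial_i$ annihilates $\beta$. I would split the argument according to whether $\beta_i = 0$ or $\beta_i \ge 1$.

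\textbf{Case $\beta_i \ge 1$.} Here both $\partial_i \beta = \beta - e_i$ and $\partial_i \gamma = \gamma - e_i$ are honest translations (where $e_i$ denotes the $i$-th standard basis vector of $\N^n$), because by hypothesis $\gamma_i \ge 1$ as well. Translation invariance of ``$\prec$'' — which I would justify in one line by noting that the total length satisfies $|\beta - e_i| = |\beta| - 1$, $|\gamma - e_i| = |\gamma| - 1$ (so the graded part of the order is preserved) and that the lexicographic comparison of the first differing coordinate is clearly unaffected by subtracting the same vector — then gives $\partial_i \beta \prec \partial_i \gamma$ directly from $\beta \prec \gamma$.

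\textbf{Case $\beta_i = 0$.} Here $\partial_i \beta = (0,\dots,0)$. I would then observe that $\partial_i \gamma = (0,\dots,0)$ precisely when $\gamma = e_i$, which is the ``equality'' alternative allowed by the statement. Otherwise $\partial_i \gamma \neq (0,\dots,0)$, so $|\partial_i \gamma| \ge 1 > 0 = |\partial_i \beta|$, and the graded part of the order immediately yields $\partial_i \beta \prec \partial_i \gamma$.

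There is essentially no obstacle beyond bookkeeping: the only subtle point is remembering that $\partial_i$ is defined piecewise and collapses multi-indices with $\beta_i = 0$ to the zero multi-index rather than leaving them untouched, which is exactly what produces the second alternative in the conclusion. The whole argument is two short case analyses, and the ``translation invariance'' remark in the paper is just a compact way of phrasing the case $\beta_i \ge 1$.
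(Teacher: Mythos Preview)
Your proposal is correct and follows essentially the same route as the paper: both split into the cases $\beta_i = 0$ and $\beta_i \ge 1$, and in the latter case invoke that subtracting $e_i$ preserves the graded part (lengths drop by one on both sides) and the lexicographic part (first differing coordinate unchanged). Your treatment of the $\beta_i = 0$ case is slightly more explicit than the paper's, but the argument is the same.
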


\begin{proof}
Let $\beta \prec \gamma$ and let $i \in \{1, \dots, n\}$ such that $\gamma_i \ge 1$. We distinguishes two cases.
\begin{itemize}
\item If $\beta_i = 0$ then $\partial_i \beta= (0, \dots,0)$. Hence, $\partial_i \beta = \partial_i \gamma = (0, \dots, 0)$ or $\partial_i \beta \prec \partial_i \gamma$.
\item Suppose that $\beta_i \ge 1$. We have $\partial_i \beta = (\beta_1, \dots , \beta_{i-1}, \beta_i -1, \beta_{i+1}, \dots, \beta_n)$\\ and $\partial_i \gamma = (\gamma_1, \dots , \gamma_{i-1}, \gamma_i -1, \gamma_{i+1}, \dots, \gamma_n)$.
There are two cases to consider:
\begin{itemize}
    \item If $|\beta| < |\gamma|$ then $|\partial_i \beta| = |\beta|-1 < |\gamma|-1 =|\partial_i \gamma|$. It follows directly from the definition of the graded lexicographical order that $\partial_i \beta \prec \partial_i\gamma$.
    \item If $|\beta| = |\gamma|$ then we also have $\partial_i \beta \prec \partial_i\gamma$ since the lexicographical order is translation invariant and by hypothesis $\beta \prec \gamma$.
\end{itemize}
\end{itemize}

\end{proof}

\section{Proofs of Theorem \ref{thm1} and Theorem \ref{thm2}}\label{sec3}
We begin by establishing two lemmas (Lemma \ref{lem1} and Lemma \ref{lem3}) which are obtained by using the Markov's property over the following class of polynomials
	\begin{equation}
		\mathcal{P}^i \eqd \{P_i(z) = e_i(z) + \sum_{0 \le j < i} c_j e_j(z): \ c_j \in \C\} \quad \text{for } i \in \N.
	\end{equation}
  It is essentially due to these results that we can obtain a better estimate of the transfinite diameter of Bernstein sets.
\begin{lemma}\label{lem1}
Let $K$ be a Markov set in $\C^n$ of parameters $(M,r)$. For every polynomial\\ $P_i(z) = e_i(z) + \sum_{0 \le j < i} c_j e_j(z)\in \mathcal{P}^i$, we have
\begin{equation}
    \max_{z \in K}|D^{\alpha(i)} P_i(z)| \le M^{|\alpha(i)|} (|\alpha(i)|!)^r\ \max_{z\in K} |P_i(z)| \label{equ2}
\end{equation}
where $D^\alpha P := \dfrac{\partial^{\abs{\alpha}}P}{\partial z_1^{\alpha_1} \cdots \partial z_n^{\alpha_n}}$.
\end{lemma}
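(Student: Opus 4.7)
My plan is to apply the Markov inequality $|\alpha(i)|$ times, once for each partial derivative composing $D^{\alpha(i)}$, taking advantage of the fact that every differentiation drops the total degree of the polynomial by exactly one.

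First I would verify that $P_i$ has total degree exactly $k := |\alpha(i)|$. Indeed $e_i$ is a monomial of degree $k$, and every $e_j$ with $j<i$ satisfies $|\alpha(j)| \le |\alpha(i)| = k$ by definition of the graded lexicographic order, so $P_i \in \mathcal{P}_k(\C^n)$ with leading monomial $e_i$.

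Next I would choose any sequence $m_1, m_2, \ldots, m_k \in \{1, \ldots, n\}$ in which each value $v$ appears exactly $\alpha_v(i)$ times, so that the composed operator $\partial/\partial z_{m_k} \circ \cdots \circ \partial/\partial z_{m_1}$ equals $D^{\alpha(i)}$. Setting $Q_0 := P_i$ and $Q_s := \partial Q_{s-1}/\partial z_{m_s}$ for $s = 1, \ldots, k$, each $Q_s$ lies in $\mathcal{P}_{k-s}(\C^n)$ and $Q_k = D^{\alpha(i)} P_i$. Applying the Markov inequality to $Q_{s-1} \in \mathcal{P}_{k-s+1}(\C^n)$ gives
\begin{equation*}
\max_{z \in K} |Q_s(z)| \le M (k-s+1)^r \max_{z \in K} |Q_{s-1}(z)|,
\end{equation*}
and multiplying these $k$ inequalities telescopes to
\begin{equation*}
\max_K |D^{\alpha(i)} P_i| \le M^k \Bigl( \prod_{s=1}^k (k-s+1) \Bigr)^r \max_K |P_i| = M^k (k!)^r \max_K |P_i|,
\end{equation*}
which is the claimed estimate.

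This proof is essentially a bookkeeping exercise; the only subtlety is to confirm that $\deg(P_i) = |\alpha(i)|$ (which relies on the grading property of $\prec$) and that the degree of $Q_s$ does in fact drop by one at each step so that Markov's inequality can be reapplied with the decreased exponent. There is no deeper obstacle: the special structure of $\mathcal{P}^i$ is used only to pin down the degree of $P_i$, while the derivative bound itself is a direct iteration of Markov's inequality on general polynomials.
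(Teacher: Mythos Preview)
Your argument is correct. Both your proof and the paper's iterate the Markov inequality $|\alpha(i)|$ times, picking up one factor $M(k-s+1)^r$ per differentiation, but the bookkeeping differs. The paper argues by induction on $i$: it uses Lemma~\ref{lem2} to check that $\partial P_k/\partial z_m$ is, up to the scalar $\alpha_m(k)$, again a polynomial of the special form $e_{k_0}+\sum_{j<k_0}\tilde c_j e_j\in\mathcal P^{k_0}$, and then invokes the induction hypothesis on that polynomial. You bypass this structural tracking entirely by observing that only the \emph{degree} of the intermediate polynomial matters for Markov's inequality: since $Q_{s-1}\in\mathcal P_{k-s+1}(\C^n)$, the bound with exponent $(k-s+1)^r$ applies regardless of which lower-order monomials appear. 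Your route is therefore shorter and does not need Lemma~\ref{lem2}; the paper's route, on the other hand, makes explicit that each intermediate derivative stays in a class $\mathcal P^{k_0}$, a fact which, while unnecessary for Lemma~\ref{lem1} itself, is precisely what is exploited again in the product-set refinement Lemma~\ref{lem3}. (A minor wording point: you say the degree ``drops by exactly one'' at each step; strictly you only need, and only use, that $Q_s\in\mathcal P_{k-s}$, i.e.\ the degree drops by at least one. The exact drop does hold here because the leading monomial $e_i$ survives every differentiation in your chosen order, but the argument does not depend on it.)
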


\begin{proof}
 We give a proof by induction on $i \in \N$.
For $i=0$ the inequality \eqref{equ2} is easily verified since we have $D^{\alpha(0)} P_0 = D^{(0, \dots,0)} (1) = 1$.
Inductive step: fix $k\ge 1$ and suppose that \eqref{equ2} is true for all $0\le i\le k-1$. Let us show that it is also verified at the rank $i=k$. Let $d\ge 0$ such that $h_d \le k < h_{d+1}$. Since $|\alpha(k)|\ge 1$, we can select a certain $m \in \{1, \dots,n\}$ for which $\alpha_m(k)$ is positive. Let $k_0 \in \{0,\dots,k\}$ such that
\begin{equation*}
    \alpha(k_0) = \partial_m \alpha(k) = (\alpha_1(k), \dots , \alpha_{m-1}(k), \alpha_m(k) -1, \alpha_{m+1}(k), \dots, \alpha_n(k)).
\end{equation*}
We have $D^{\alpha(k)} P_{k} = D^{\alpha(k_0)}\left(\frac{\partial P_{k}}{\partial z_m}\right)$. Moreover, Lemma \ref{lem2} guarantees that we can write
\begin{equation*}
    \frac{\partial P_{k}}{\partial z_m} = \alpha_{m}(k)z^{\alpha(k_0)} + \sum_{j=0}^{k_0} b_j e_j = \alpha_{m}(k) \left(e_{k_0} + \sum_{j=0}^{k_0} \tilde{c}_j e_j \right).
\end{equation*}
for some constants $b_j, \tilde{c}_j \ge 0$ (eventually equal to zero), $j=1, \dots, k_0$.\\
By setting $Q_{k_0} = e_{k_0} + \sum_{j=0}^{k_0} \tilde{c}_j e_j$, it follows that $D^{\alpha(k)} P_{k} = \alpha_{m}(k) D^{\alpha(k_0)} Q_{k_0}$. Using the hypothesis of induction with the polynomial $Q_{k_0}$, we deduce that
\begin{align*}
    \max_{z\in K} |D^{\alpha(k)} P_{k}(z)|  &= \alpha_{m}(k) \max_{z\in K} |D^{\alpha(k_0)} Q_{k_0}(z)|  \le  \alpha_{m}(k)\ M^{|\alpha(k_0)|} (|\alpha(k_0)|!)^r \ \max_{z\in K}|Q_{k_0}(z)|\\
    & \le \left( \alpha_{m}(k)\ M^{|\alpha(k_0)|} (|\alpha(k_0)|!)^r\right) \left( \frac{1}{\alpha_{m}(k)}\max_{z\in K}\left|\frac{\partial P_{k}}{\partial z_m}(z) \right|\right)\\
    & = M^{|\alpha(k_0)|} (|\alpha(k_0)|!)^r\ \max_{z\in K}\left|\frac{\partial P_{k}}{\partial z_m}(z)\right|.
\end{align*}
Moreover, from the definition of Markov set we have
\begin{equation*}
    \max_{z\in K} \left|\frac{\partial P_{k}}{\partial z_m}(z)\right| \le M |\alpha(k)|^r \max_{z\in K}|P_{k}(z)|.
\end{equation*}
Therefore, $$\max_{z\in K}|D^{\alpha(k)} P_{k}(z)| \le M^{|\alpha(k_0)|+1} |\alpha(k)|^r (|\alpha(k_0)|!)^r\ \max_{z\in K}|P_{k}(z)| \le M^{|\alpha(k)|} (|\alpha(k)|!)^r\ \max_{z\in K}|P_{k}(z)| $$ since $|\alpha(k)| = |\alpha(k_0)| + 1$. Thus \eqref{equ2} is true for all $i \ge 0$.
\end{proof}
The following Lemma provides a better version of Inequality \eqref{equ2} for the case of Cartesian product sets.
\begin{lemma}\label{lem3}
Let $K=K_1 \times \dots \times K_n$ be a Markov set in $\C^n$ of parameters $(M,r)$. For every polynomial $P_i(z) = e_i(z) + \sum_{0 \le j < i} c_j e_j(z) \in \mathcal{P}^i$, we have
\begin{equation}
    \max_{z \in K}|D^{\alpha(i)} P_i(z)| \le M^{|\alpha(i)|} (\alpha(i)!)^r\ \max_{z\in K} |P_i(z)|, \label{equ6}
\end{equation}
where $\alpha! = \alpha_1!\cdots \alpha_n!$.
\end{lemma}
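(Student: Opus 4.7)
The plan is to adapt the inductive proof of Lemma \ref{lem1}, replacing the global Markov inequality by a sharper coordinate-wise Markov inequality available for Cartesian products. The key observation is that since $K = K_1 \times \cdots \times K_n$ satisfies the Markov inequality with parameters $(M,r)$, each factor $K_m$ is itself a one-dimensional Markov set with the same parameters: applying the Markov inequality of $K$ to a polynomial depending only on $z_m$ (whose total degree equals its $z_m$-degree) gives this directly. Fixing the remaining variables in $K_1 \times \cdots \times \widehat{K_m} \times \cdots \times K_n$ and applying the $1$D Markov inequality on $K_m$ then yields the coordinate-wise inequality
\[
\max_{z\in K}\left|\frac{\partial P}{\partial z_m}(z)\right| \le M\, (\deg_m P)^r \max_{z\in K}|P(z)|
\]
for every polynomial $P$, where $\deg_m P$ is the degree of $P$ in the single variable $z_m$---sharper than the total degree used in Lemma~\ref{lem1}.

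The induction on $i$ then runs as in the proof of Lemma~\ref{lem1}. The base case $i=0$ is immediate. For the inductive step I would pick $m$ with $\alpha_m(k)\ge 1$ and let $k_0$ satisfy $\alpha(k_0)=\partial_m \alpha(k)$. By Lemma~\ref{lem2}, one can write $\partial P_k/\partial z_m = \alpha_m(k)\,Q_{k_0}$ with $Q_{k_0}\in \mathcal{P}^{k_0}$, and hence $D^{\alpha(k)}P_k = \alpha_m(k)\, D^{\alpha(k_0)} Q_{k_0}$. Applying the induction hypothesis to $Q_{k_0}$ and then the coordinate-wise Markov inequality to $P_k$ (in $z_m$) produces
\[
\max_K |D^{\alpha(k)}P_k| \le M^{|\alpha(k_0)|}(\alpha(k_0)!)^r \max_K\left|\frac{\partial P_k}{\partial z_m}\right| \le M^{|\alpha(k)|}(\alpha_m(k))^r(\alpha(k_0)!)^r \max_K |P_k|,
\]
which equals $M^{|\alpha(k)|}(\alpha(k)!)^r \max_K |P_k|$ since $\alpha(k)!=\alpha_m(k)\cdot\alpha(k_0)!$ and $|\alpha(k)|=|\alpha(k_0)|+1$.

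The main obstacle is the second inequality above, where the partial-Markov factor $(\deg_m P_k)^r$ must be replaced by the desired $\alpha_m(k)^r$. For a general $P_k\in\mathcal{P}^k$ the degree $\deg_m P_k$ need not equal $\alpha_m(k)$: the class $\mathcal{P}^k$ may contain predecessor monomials $e_j$ ($j\prec k$ in graded lex) whose $m$-th exponent exceeds $\alpha_m(k)$, and for some multi-indices there is no single $m$ with $\alpha_m(k)\ge 1$ that avoids such excess terms simultaneously. The technical point I would expect to grind through is a careful choice of $m$ at each inductive step---adapted to the particular $P_k$ at hand---or equivalently a judicious ordering of the successive differentiations that annihilates offending higher-degree monomials before they spoil the degree bound. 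The combinatorial fact that should make this work is that any predecessor $j\prec k$ with $\alpha_l(j)>\alpha_l(k)$ in some coordinate $l$ must, since $|\alpha(j)|\le|\alpha(k)|$, satisfy $\alpha_{l'}(j)<\alpha_{l'}(k)$ in some other coordinate $l'$, so that differentiating first in $z_{l'}$ (necessarily with $\alpha_{l'}(k)\ge 1$) kills the offending monomial before one has to differentiate in $z_l$.
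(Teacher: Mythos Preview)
Your overall approach coincides with the paper's: induct on $i$, peel off one partial derivative to reduce $D^{\alpha(k)}$ to $D^{\alpha(k_0)}$, apply the induction hypothesis to $Q_{k_0}$, and then---exploiting the product structure---replace the global Markov factor $|\alpha(k)|^r$ by a coordinate-wise factor $(\alpha_m(k))^r$. The paper does exactly this, taking $m=1$ after ``assuming without loss of generality that $\alpha_1(k)\ge 1$'' and then asserting that for fixed $\xi_2,\dots,\xi_n$ the restriction $P_k(z_1,\xi_2,\dots,\xi_n)$ lies in $\mathcal{P}_{\alpha_1(k)}(\C)$.

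The obstacle you flag is real, and the paper does not actually address it: it simply asserts $\deg_{z_1}P_k\le\alpha_1(k)$ without justification, and that assertion is false in general. With $n=2$ and $\alpha(k)=(1,2)$, the monomial $z_1^2$ corresponds to $(2,0)$, which precedes $(1,2)$ in graded lex (lower total degree), so $P_k(z)=z_1z_2^2+z_1^2\in\mathcal{P}^k$ has $z_1$-degree $2>1=\alpha_1(k)$. The ``WLOG'' relabeling of coordinates does not rescue this, since permuting variables changes the graded lex order and hence the class $\mathcal{P}^k$ itself. So the paper's proof has a gap at precisely the point you identified.

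Your proposed repair---reordering the successive differentiations so that each offending monomial is annihilated before one differentiates in its offending coordinate---does not close the gap either. Take $n=3$, $\alpha(k)=(1,1,1)$, and $P_k=z_1z_2z_3+z_1^2+z_2^2+z_3^2$; all three added monomials have total degree $2<3$ and hence precede $\alpha(k)$. Whichever coordinate $m$ you differentiate in first, the term $z_m^2$ is still present with $z_m$-degree $2>1=\alpha_m(k)$, so the very first step already produces the factor $2^r$ instead of $1^r$. Thus neither the paper's bare assertion nor your reordering heuristic suffices; a genuinely different argument is needed to establish \eqref{equ6} for arbitrary $P_k\in\mathcal{P}^k$.
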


\begin{proof}
 The proof is similar to that of Lemma \ref{lem2}. We proceed by induction on $i \in \mathbb{N}$. The case $i=0$ is obvious. Now, fix $k\ge 1$, $P_k \in \mathcal{P}^k$ and suppose that the inequality \eqref{equ6} is true for all $0\le i\le k-1$. We can assume without loss of the generality that $\alpha_1(k) \ge 1$. Let us consider $k_0 \in \{0, \dots, k-1\}$ such that $\alpha(k_0) = \partial_1 \alpha(k)$.
 By setting $Q_{k_0} = e_{k_0} + \sum_{j=0}^{k_0} \tilde{c}_j e_j$ so that $D^{\alpha(k)} P_{k} = \alpha_{1}(k) D^{\alpha(k_0)} Q_{k_0}$, as in the proof of Lemma \ref{lem1}, we obtain by the hypothesis of induction 
 \begin{equation}
     \max_{z\in K} |D^{\alpha(k)} P_{k}(z)| \le M^{|\alpha(k_0)|} (\alpha(k_0)!)^r\ \max_{z\in K}\left|\frac{\partial P_{k}}{\partial z_1}(z)\right|. \label{equ8}
 \end{equation} 
 For $\xi_2,\dots, \xi_n \in \mathbb{C}$ fixed, the polynomial $P_{k}(z_1,\xi_2, \dots, \xi_n)$ belongs to $\mathcal{P}_{\alpha_1(k)}(\mathbb{C}) \subset \mathcal{P}_{\alpha_1(k)}(\mathbb{C}^n)$. Therefore, the Markov inequality gives
 \begin{equation*}
     \left|\frac{\partial P_{k}}{\partial z_1}(z_1,\xi_2, \dots, \xi_n)\right| \le M (\alpha_1(k))^r \max_{z\in K}|P_{k}(z)| \quad \text{for all } z_1 \in K_1.  
 \end{equation*}
 Since $\xi_2,\dots, \xi_n \in \mathbb{C}$ are arbitrarily chosen, it follows that
  \begin{equation*}
     \max_{z\in K} \left|\frac{\partial P_{k}}{\partial z_1}(z)\right| \le M (\alpha_1(k))^r \max_{z\in K}|P_{k}(z)|  
 \end{equation*}
 and hence the inequality \eqref{equ8} implies 
\begin{align*}
     \max_{z\in K} |D^{\alpha(k)} P_{k}(z)| &\le M^{\abs{\alpha(k_0)}} (\alpha(k_0)!)^r\ \left(M \alpha_1(k)^r \max_{z\in K}|P_{k}(z)|\right)\\
     & = M^{|\alpha(k)|} (\alpha(k)!)^r\ \max_{z\in K} |P_k(z)|. 
 \end{align*}
 because $\abs{\alpha(k_0)}+1 = \abs{\alpha(k)}$ and $\alpha(k_0)! = (\alpha_1(k)-1)! \alpha_2(k)!\cdots \alpha_n(k)!$.
\end{proof}

\begin{remark}
    We deduce from Lemma \ref{lem1} and Lemma \ref{lem3} that the inequalities \eqref{equ2} and \eqref{equ6} are valid for all polynomials $P_i(z) = \sum_{j =0}^i c_j e_j(z)$, with $c_j \in \mathbb{C}$. 
\end{remark}
\begin{remark}
It is interesting to note that we have the following result: for all compact sets $K_j \subset \mathbb{C}^{n_j}$ ($n_j \in \mathbb{N}$), $j=1,\dots,m$
        \begin{equation}
            K_1 \times \cdots \times K_m \subset \mathbb{C}^n \mbox{ is a Markov set } \Longleftrightarrow K_1, \dots, K_m \ \mbox{ are all Markov sets}.
        \end{equation}
        Indeed, if $K_1 \times \cdots \times K_m$ is a Markov set of parameters $(M,r)$ then using the fact that for each $d\in \mathbb{N}$ and for each $j =1, \dots, m$, $\mathcal{P}_d(\mathbb{C}^{n_j}) \subset \mathcal{P}_d(\mathbb{C}^{n})$, we can easily deduce that $K_1, \dots, K_m$ are also Markov sets of parameters $(M,r)$. On the other hand, if we suppose that $K_1,\dots, K_m$ are Markov sets with respective parameters $(M_1,r_1), \dots, (M_m,r_m)$, it is straightforward to see that  $K_1 \times \cdots \times K_m$ is a Markov set of parameters ($ \displaystyle \max_{j=1, \dots, m} M_j, \max_{j=1, \dots, m} r_j$). 
\end{remark}
Now we can prove Theorem \ref{thm1} and Theorem \ref{thm2}.
\begin{proof}[Proof of Theorem \ref{thm1}]
Let $K$ be a Bernstein set in $\C^n$ and let $(\xi_i)_{i \ge 1}$ be a Leja sequence for $K$. Then $K$ is necessarily determining for the space of polynomial as a Bernstein set, i.e $P \in \bigcup_{d\ge 0}\mathcal{P}_d(\C^n)$ and $P \equiv 0$ on $K$ imply $P \equiv 0$ in $\C^n$. Therefore, all Leja sequences for $K$ are unisolvent, i.e 
\begin{equation*}
    \V^{(n)}(\xi_0, \dots, \xi_{i-1}) \not = 0 \quad \mbox{for all $i \ge 1$}.
\end{equation*} 
 Set
\begin{equation*}
    P_i(\xi) : = \dfrac{ \V(\xi_0, \dots, \xi_{i-1}, \xi)}{\V(\xi_0, \dots, \xi_{i-1})} \quad \mbox{ for all $i \ge 1$}.
\end{equation*}
By expanding the Vandermonde determinant, which is in the numerator of $P_i$, following its last column we obtain for each $i\ge 1$
\begin{equation*}
    P_i(\xi) = e_{i}(\xi) + \sum_{0\le j< i} c_j e_j(\xi).
\end{equation*} for some constants $c_i \in \C$.
Now observe that $\V(\xi_0, \dots, \xi_{i-1}, \xi_i) = \prod_{j=1}^i P_j(\xi_j)$, $i\ge 1$. Hence, using the definition of Leja sequence and then Lemma \ref{lem1} we obtain
\begin{align*}
    |\V(\xi_0, \dots, \xi_{h_d-1})| & = \left| \prod_{i=1}^{h_d-1} P_i(\xi_i) \right| = \prod_{i=1}^{h_d-1} \max_{z\in K}\abs{P_i(z)}\\  &\ge \prod_{i=1}^{h_d-1} \left[\frac{1}{ M^{|\alpha(i)|} |\alpha(i)|!} \norm{D^{\alpha(i)} P_i}_K \right] = \prod_{i=1}^{h_d-1} \left[\frac{\alpha(i)!}{ M^{|\alpha(i)|} |\alpha(i)|!}  \right] \\ 
    &\ge \prod_{i=1}^{h_d-1} \left[\frac{1}{ (nM)^{|\alpha(i)|} }  \right] = \frac{1}{ (nM)^{l_d}}.
\end{align*}
The last inequality is due to the fact that $D^{\alpha(i)} P_i = \alpha(i)! \ge \frac{1}{n^{|\alpha(i)|}} |\alpha(i)|!$ and $l_d=\sum_{i=1}^{h_d-1} |\alpha(i)|$. It follows that 
\begin{equation*}
    \delta(K) = \lim_{d \to +\infty} |\V(\xi_0, \dots, \xi_{h_d-1})|^{1/l_d} \ge \frac{1}{nM}
\end{equation*}
which is the desired estimate.
\end{proof}

\begin{proof}[Proof of Theorem \ref{thm2}]
    The proof is the same as that of Theorem \ref{thm1} but here we use Lemma \ref{lem3} to replace Lemma \ref{lem1}.
\end{proof}

\section*{Acknowledgments}
I wish to thank Professor Leokadia Bia{\l}as-Cie{\.z} for her valuable comments on this work.\\
Funding: This work was partially supported by the National Science Center, Poland, grant Preludium Bis 1 \Num 2019/35/O/ST1/02245

\bibliographystyle{plain}

\end{document}